\documentclass[11pt, english, a4paper]{amsart}
\usepackage[margin=3.5cm]{geometry}
\usepackage{amssymb,amsmath,amsthm}
\usepackage{mathtools}
\usepackage{mathrsfs}
\usepackage{accents}
\usepackage[utf8]{inputenc}
\usepackage[T1]{fontenc}

\usepackage{caption}
\usepackage{subcaption}

\usepackage{xfrac}
\usepackage{enumerate}
\usepackage{graphics, float}

\usepackage[colorlinks]{hyperref}
\hypersetup{allcolors = black}

\usepackage{url}

\usepackage{svg}

\usepackage{comment}

\newtheorem{theorem}{Theorem}
\newtheorem{proposition}{Proposition}

\newcommand{\R}{\mathbb{R}}
\newcommand{\la}{\langle}
\newcommand{\ra}{\rangle}
\newcommand{\I}{\mathcal{I}}

\newcommand{\D}{\mathrm{D}}
\newcommand{\E}{\mathcal{E}}

\newcommand{\LL}{\mathcal{L}}
\newcommand{\HH}{\mathcal{H}}

\title{An elementary solution of the polarization problem}
\author{Gergely Ambrus}
\date{\today}

\begin{document}

\begin{abstract}
We give a self-contained and entirely elementary proof of the strong polarization inequality: for any unit vectors $u_1,\ldots,u_n\in\R^d$, there exists a unit vector $v\in\R^d$ such that
\[
\sum_{i=1}^n \frac{1}{\langle u_i,v\rangle^2}\le n^2.
\]
As an immediate consequence, we recover the real linear polarization inequality
\[
\prod_{i=1}^n |\langle u_i,v\rangle|\ge n^{-n/2}.
\]
\end{abstract}

\maketitle

\section{Introduction}

The study of linear polarization constants was initiated in the late 1990s by Ryan and Turett~\cite{RyanTurett1998} and Benítez, Sarantopoulos and Tonge~\cite{BenitezSarantopoulosTonge1998}; the latter work gave rise to the real linear polarization conjecture. The stronger conjecture stated below was subsequently proposed by Ball and Frenkel; see~\cite[Conjecture~1.3]{Ambrus2009}.

Recently, both the real linear and the strong polarization problems have been resolved and subsequently generalized in a series of papers. Martínez and Ortega-Moreno
\cite{MartinezOrtegaMoreno2026} gave the first proof of the strong
polarization inequality, based on the Euler--Jacobi vanishing theorem; the
real linear polarization conjecture follows as a consequence. (The planar case was proved  earlier by the present author, Ball and Erdélyi~\cite{Ambrus2009,AmbrusBallErdelyi2013}). Shortly afterwards, Galicer, Ortega-Moreno and Pinasco
\cite{GalicerOrtegaMorenoPinasco2026} established a weighted version of the
strong polarization inequality, with further consequences for products of
powers of linear functionals and for plank problems. Ouimet and Greaves
\cite{OuimetGreaves2026} then proved the Strong Gaussian Product Inequality,
which in particular yields the real linear polarization inequality and its
weighted multiplicative extension. Martínez and Ortega-Moreno
\cite{MartinezOrtegaMoreno2026_Aomoto} subsequently gave an alternative proof,
as well as a generalization of the Gaussian Product Inequality, and
characterized the extremal configurations for the strong polarization
inequality. Their approach is based on an interpolation theorem for the
Aomoto space that relies on the dimension theorem of Orlik and Terao
\cite{OrlikTerao94}. Finally, the present author \cite{Ambrus26} proved an
Inverse Spectral Theorem which places these identities in a more general
inverse-eigenvector framework and again yields the strong polarization
inequality, together with weighted, matrix-valued, and other generalizations.
We note that, in the positive semidefinite setting, the inverse spectral
decomposition is already implicit in
\cite{OuimetGreaves2026,MartinezOrtegaMoreno2026_Aomoto}. Artificial intelligence tools also played a role, to varying degrees, in several of these works.

The goal of the present note is to give a self-contained and  entirely elementary proof of the polarization problems by distilling and combining the ideas from the above articles. The main objective is the following result.

\begin{theorem}[The strong polarization inequality] \label{thm:strongpol}
Let $u_1, \ldots, u_n \in \R^d$ be unit vectors. Then there exists a unit vector $v \in \R^d$ that satisfies
\begin{equation*}\label{eq:strongpol}
\sum_{i=1}^n \frac {1}{\la u_i, v \ra^2} \leq n^2.
\end{equation*}
\end{theorem}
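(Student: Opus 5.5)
The plan is to take $v$ to be a critical point of the product $\prod_i \la u_i,v\ra^2$ on the sphere. This rewrites the claim as an inequality for the Gram matrix $G=(\la u_i,u_j\ra)_{i,j}$. That inequality should then follow from the second-order (maximality) condition, possibly combined with an averaging argument.

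\emph{Step 1 (first-order condition).} Consider the degree-$0$ homogeneous function
\[
f(v)=\sum_{i=1}^n \log \la u_i,v\ra^2 - n\log |v|^2
\]
on the open set where no $\la u_i,v\ra$ vanishes. It tends to $-\infty$ at the walls of each cell of the hyperplane arrangement $\{\la u_i,v\ra=0\}$, so it attains a maximum on the sphere inside any cell. Write $a_i=\la u_i,v\ra$ at such a maximizer with $|v|=1$. Then $\nabla f=0$ gives
\[
\sum_i \frac{u_i}{a_i}= n v .
\]
Putting $x_i=1/a_i$, this reads $Ux=nv$, where $U$ has columns $u_i$. Taking inner products with $u_j$ gives $x_j(Gx)_j=n$ for every $j$. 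Consequently $x^TGx=\sum_j x_j(Gx)_j=n^2$, which is consistent with $|v|=1$.

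\emph{Step 2 (reduction).} Since $\sum_i 1/a_i^2=|x|^2$ and $n^2=x^TGx$, Theorem~\ref{thm:strongpol} is equivalent to
\[
|x|^2\le x^TGx ,\qquad\text{i.e.}\qquad \sum_{i\ne j} \frac{\la u_i,u_j\ra}{a_ia_j}\ge 0 ,
\]
for a suitably chosen critical point $x$. This is false for arbitrary $x$, so the choice of critical point matters. The tools available are the relation $x_j(Gx)_j=n$ and $G_{jj}=1$.

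\emph{Step 3 (second-order condition; main obstacle).} At a maximum, the Hessian of $f$ in the $x$-variables must be negative semidefinite on all of $\R^n$, by homogeneity. Using $Gx=(n/x_j)_j$, this Hessian is
\[
-2\,\mathrm{diag}(x_i^{-2})-\tfrac{2}{n}G+\tfrac{4}{n}\,(x^{-1})(x^{-1})^T ,
\]
where $x^{-1}=(1/x_j)_j$. The first thing to try is to test this against well-chosen vectors: $e_k$, $x$ itself, $x^{-1}$, and sign-randomized vectors $\varepsilon\circ x$ averaged over $\varepsilon\in\{\pm1\}^n$ (averaging kills the off-diagonal terms of $G$). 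The aim is to combine the resulting inequalities with $x_j(Gx)_j=n$ to get $|x|^2\le n^2$. I expect this to be the hard step, since naive test vectors only give bounds in the wrong direction.

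If this fails, I would switch to minimizing $\sum_i a_i^{-2}$ directly on the sphere within one cell. The Lagrange condition is then $\sum_i u_i/a_i^3=\bigl(\sum_i a_i^{-2}\bigr)v$. Pairing it with $Ux=nv$ and applying Cauchy--Schwarz in the form $\bigl(\sum_i a_i^{-2}\bigr)^2\le \bigl(\sum_i a_i^{-3}\la u_i,\cdot\ra\bigr)\cdots$ should be tried, to close the gap between the two critical-point systems.
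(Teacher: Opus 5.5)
Your Steps 1--2 are correct and match the paper's reduction. With $x=\sqrt{n}\,\alpha$, the relation $x_j(Gx)_j=n$ says $G\alpha=\alpha^{-1}$, i.e.\ $\alpha$ is an inverse eigenvector of the Gram matrix, and the target $|x|^2\le n^2$ is exactly $|\alpha|^2\le n$. You also rightly note that the choice of critical point matters.

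\textbf{The gap.} Step~3, and the fallback, give no way of making that choice. Everything you propose to use (the critical point equations, $G_{jj}=1$, negative semidefiniteness of the Hessian) holds at the maximizer of $f$ in \emph{every} cell. The inequality fails at some of these maximizers, so no combination of those local conditions can yield $|x|^2\le n^2$.

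Concretely, let $n=d=2$ and $\langle u_1,u_2\rangle=c\in(0,1)$. Take the narrow cell where $\langle u_1,v\rangle$ and $\langle u_2,v\rangle$ have opposite signs. There $f$ is maximized at $v=(u_2-u_1)/|u_2-u_1|$, with $a=\sqrt{(1-c)/2}\,(-1,1)$. Then $x=\sqrt{2/(1-c)}\,(-1,1)$ satisfies $\sum_i u_i/a_i=2v$ and $x_j(Gx)_j=2$, and it is a genuine local maximum. Yet $|x|^2=4/(1-c)>4=n^2$.

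The fallback fails for the same reason. The minimum of $\sum_i a_i^{-2}$ inside that cell is also $4/(1-c)$, so within-cell Lagrange conditions cannot bound the value; only the global minimum is $\le n^2$, and that is the theorem itself. Averaging test vectors over signs $\varepsilon$ at a single critical point does not bring in the other cells.

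(Incidentally, the matrix you display is the Hessian of $\sum_i\log x_i^2-n\log(x^\top Gx)$, i.e.\ of the dual problem of maximizing $|x_1\cdots x_n|$ on an ellipsoid $x^\top Gx=\mathrm{const}$. It is not the Hessian of $f$ pulled back via $v=Ux/n$. Both are negative semidefinite at every critical point, which is precisely the obstruction.)

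\textbf{The missing idea.} You need a global identity that compares all critical points at once; the paper never selects one, it averages over them.

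First, replace $G$ by the positive definite matrix $(1-t)G+tI_n$ and pass to the limit. This is needed because when the $u_i$ are linearly dependent your arrangement has fewer than $2^n$ cells. After this, there is exactly one inverse eigenvector in each of the $2^n$ orthants.

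Next, let $\mathrm{D}(\cdot)$ denote the diagonal matrix and set $p_\alpha(x)=\det\bigl(I_n+\mathrm{D}(\alpha)G\,\mathrm{D}(x)\bigr)$. These multi-affine polynomials vanish at every other inverse eigenvector $\beta$, since $\mathrm{D}(\beta)^{-1}(\alpha-\beta)$ is a nonzero vector in the kernel of $I_n+\mathrm{D}(\alpha)G\,\mathrm{D}(\beta)$. They are positive at $\alpha$. Hence they form an interpolation basis of the $2^n$-dimensional space of multi-affine polynomials.

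Interpolate $1$ and $x_k$, and compare constant and linear coefficients using $p_\alpha(x)=1+\sum_i\alpha_ix_i+(\text{higher order})$. This gives
\[
\sum_{\alpha}c_\alpha\,\alpha\alpha^\top=I_n,\qquad\sum_\alpha c_\alpha=1,\qquad c_\alpha=\det\bigl(I_n+\mathrm{D}(\alpha)G\,\mathrm{D}(\alpha)\bigr)^{-1}>0.
\]
Taking traces, the weighted average of $|\alpha|^2$ equals $n$, so some $\alpha$ has $|\alpha|^2\le n$, i.e.\ $|x|^2\le n^2$ in your normalization. Without such an identity, or some other genuinely global selection principle, your plan cannot single out a good cell.
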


By the arithmetic-geometric mean inequality, this readily implies the product inequality:

\begin{theorem}[The $n$th real linear polarization inequality] \label{thm:pol}
Let $u_1, \ldots, u_n \in \R^d$ be unit vectors. Then there exists a unit vector $v \in \R^d$ that satisfies
\begin{equation*}\label{eq:prodpol}
\prod_{i=1}^n |\la u_i, v \ra| \geq n^{-n/2}.
\end{equation*}
\end{theorem}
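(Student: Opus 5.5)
The statement to prove is Theorem~\ref{thm:pol}, and it follows directly from Theorem~\ref{thm:strongpol}, which I take as given.

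Fix unit vectors $u_1,\ldots,u_n\in\R^d$ and let $v$ be the unit vector supplied by Theorem~\ref{thm:strongpol}. Then
\[
\sum_{i=1}^n \la u_i,v\ra^{-2}\le n^2 .
\]
The left side is finite, so $\la u_i,v\ra\neq 0$ for every $i$. Hence the numbers $a_i=\la u_i,v\ra^{-2}$ are well-defined and strictly positive.

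Next apply the arithmetic--geometric mean inequality to $a_1,\ldots,a_n$:
\[
\Bigl(\prod_{i=1}^n \la u_i,v\ra^{-2}\Bigr)^{1/n}\le \frac1n\sum_{i=1}^n \la u_i,v\ra^{-2}\le \frac{n^2}{n}=n .
\]
Raise both sides to the power $n$ to get $\prod_{i=1}^n \la u_i,v\ra^{-2}\le n^n$. Taking reciprocals gives
\[
\prod_{i=1}^n \la u_i,v\ra^2\ge n^{-n},
\]
and taking square roots gives $\prod_{i=1}^n|\la u_i,v\ra|\ge n^{-n/2}$, which is the claim.

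There is no real obstacle in this deduction. The only point that needs care is that every inner product is nonzero, and that is forced by the strong inequality holding with a finite right-hand side. All the substance lies in Theorem~\ref{thm:strongpol} itself.
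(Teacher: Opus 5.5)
Your deduction is correct and follows the paper's route exactly: the paper also derives Theorem~\ref{thm:pol} from Theorem~\ref{thm:strongpol} by applying the arithmetic--geometric mean inequality to the numbers $\la u_i,v\ra^{-2}$. Your check that every inner product is nonzero is a detail the paper leaves implicit. In the paper's construction it holds automatically, since $\sqrt{n}\,\la u_i,v\ra=(M\alpha)_i=\alpha_i^{-1}\neq 0$ for an inverse eigenvector $\alpha$.
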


\section{The proof}

Following the approach of~\cite{Ambrus2009, Ambrus26}, we  work in the coefficient space, and search $w = \sqrt{n} \, v$ in the form 
\[
w = \sum_{i} \alpha_i u_i
\]
with $\alpha = (\alpha_1, \ldots, \alpha_n) \in \R^n$.
Then the condition $|v|=1$ is equivalent to 
\[
\alpha^\top M \alpha =n
\]
where $M = (m_{ij})_{i,j=1}^n$ is the Gram matrix of the vector system $\{ u_i \}_{i=1}^n$ defined by
\[
m_{ij} = \la u_i, u_j \ra.
\]
Note that $M$ is a {\em correlation matrix}, that is, a real positive semidefinite matrix whose diagonal entries are 1. Accordingly, set
\[
\E_M=\{x\in\R^n:x^\top Mx=n\}.
\]
Since $\sqrt{n}\la u_i, v \ra = \la u_i, w \ra = (M \alpha)_i$, Theorem~\ref{thm:strongpol} follows from the existence of $\alpha \in \E_M$ with 
\begin{equation}\label{eq:Malpha}
    \sum_i \frac 1 {(M \alpha)_i^2} \leq n.
\end{equation}
For vectors $\alpha$ that satisfy a certain algebraic relation, the inequality above takes a very appealing form.

For $\alpha=(\alpha_1, \ldots, \alpha_n)\in \R^n$ with nonzero coordinates, we write $\alpha^{-1}$ for its coordinatewise reciprocal:
\[
\alpha^{-1}
=
\left(\frac1{\alpha_1},\dots,\frac1{\alpha_n}\right).
\]
A vector $\alpha \in \R^n$ is called an {\em inverse eigenvector} of $M$ if 
\begin{equation}\label{eq:iev_def}
M \alpha = \alpha^{-1},
\end{equation}
equivalently, $\alpha$ is a real solution of the system of quadratic polynomial equations
\begin{equation}\label{eq:ievpoldef}
    x_k \sum_j m_{kj} x_j = 1, \quad k \in [n]
\end{equation}
where $[n]= \{ 1, \ldots, n\}$. 

We denote the set of inverse eigenvectors of $M$ by $\I(M)$.  Note that for $\alpha \in \I(M)$, we have 
\[
\sum_i \frac 1 {(M \alpha)_i^2} = \sum_i \alpha_i^2 = |\alpha|^2.
\]
Therefore, by using \eqref{eq:Malpha},   Theorem~\ref{thm:strongpol}  is implied by the next statement. 

\begin{theorem}\label{thm:smalliev}
       Every correlation matrix $M$ has an inverse eigenvector with Euclidean norm at most~$\sqrt{n}$.
\end{theorem}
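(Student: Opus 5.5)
The plan is to construct inverse eigenvectors variationally, one in each orthant, and then pick the right orthant.

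\emph{Reduction.} First I would reduce to positive definite $M$. Replace $M$ by $M_\varepsilon=(1-\varepsilon)M+\varepsilon I$, which is again a correlation matrix. If each $M_\varepsilon$ has an inverse eigenvector $\alpha_\varepsilon$ with $|\alpha_\varepsilon|\le\sqrt n$, then by compactness a subsequence converges to some $\alpha$ with $|\alpha|\le\sqrt n$. The limit is a genuine inverse eigenvector, because no coordinate can tend to $0$: the relation $\alpha_k(M_\varepsilon\alpha)_k=1$ together with $|(M_\varepsilon\alpha)_k|\le \|M_\varepsilon\|\,|\alpha|\le n\sqrt n$ gives $|\alpha_k|\ge n^{-3/2}$.

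\emph{Existence in every orthant.} For positive definite $M$ and a sign vector $s\in\{\pm1\}^n$, consider
\[
F(x)=\tfrac12 x^\top Mx-\sum_i\log|x_i|
\]
on the open orthant $\{s_ix_i>0\}$. The function $F$ is strictly convex there, and it tends to $+\infty$ at the boundary and at infinity. Hence it has a unique minimizer, and $\nabla F=0$ is exactly \eqref{eq:iev_def}. This produces an inverse eigenvector $\alpha^{(s)}$ in every orthant, and $\alpha^{(-s)}=-\alpha^{(s)}$.

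\emph{Structural identities.} Each $\alpha\in\I(M)$ satisfies $\alpha^\top M\alpha=\sum_i\alpha_i(M\alpha)_i=n$, so $\alpha\in\E_M$ automatically. Put $D=\mathrm{diag}(\alpha)$. Then $DMD$ is positive semidefinite and
\[
\operatorname{tr}(DMD)=|\alpha|^2,\qquad DMD\,\mathbf 1=D\alpha^{-1}=\mathbf 1 .
\]
So $\mathbf 1$ is an eigenvector of $DMD$ with eigenvalue $1$. Consequently the goal $|\alpha|^2\le n$ is equivalent to saying that the remaining $n-1$ eigenvalues of $DMD$ average to at most $1$.

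\emph{Choosing the orthant (main obstacle).} I do not expect an arbitrary orthant to work, so the orthant must be chosen cleverly. My first choice is the orthant whose minimizer maximizes $\prod_i|\alpha_i|$, i.e.\ the global minimizer of $F$ over all orthants. I would then try to extract the spectral bound on $DMD$ from global minimality, by comparing $F(\alpha)$ with $F$ evaluated at suitable competitors in other orthants, for instance those obtained by flipping signs of $\alpha$ along the eigenvectors of $DMD$.

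If that fails, my fallback is an averaging identity. I would seek explicit weights $w_s>0$, most likely $w_s\propto\det(I+D_sMD_s)^{-1}$, which arises from the Hessian $D(M+D^{-2})D=I+DMD$. The aim is an identity of the form $\sum_s w_s(|\alpha^{(s)}|^2-n)=0$, which would force some orthant to have $|\alpha^{(s)}|^2\le n$. I would try to prove it elementarily by integrating $e^{-F}$-type expressions over each orthant, or by differentiating along the homotopy $M_t=(1-t)I+tM$. At $t=0$ every $\alpha^{(s)}=s$ has $|\alpha^{(s)}|^2=n$ exactly, and I would try to show the weighted average is preserved along the homotopy.

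Establishing this averaging identity, or the minimality argument, is the step I expect to be hardest. Everything else above is routine.
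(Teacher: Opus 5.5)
There is a genuine gap: the proposal stops exactly where the content of the theorem lies, and your first strategy cannot work. Your reduction is fine. The strictly convex $F$ is also a clean way to get exactly one inverse eigenvector per orthant, hence $|\I(M)|=2^n$, and it replaces the paper's Lagrange-multiplier and separating-hyperplane argument. But you never prove that some orthant works, and your first choice is provably the wrong one.

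Take $n=2$ and $m_{12}=c\in(0,1)$. The inverse eigenvectors are $\pm(1,1)/\sqrt{1+c}$ and $\pm(1,-1)/\sqrt{1-c}$. The global minimizer of $F$, i.e.\ the one with the largest $|\alpha_1\alpha_2|$, is $\pm(1,-1)/\sqrt{1-c}$, and it has $|\alpha|^2=2/(1-c)>2$. This is no accident. Since $\la u_i,w\ra=(M\alpha)_i=1/\alpha_i$, maximizing $|P(\alpha)|$ selects the critical direction with the \emph{smallest} product $\prod_i|\la u_i,v\ra|$, which is the worst candidate. So no argument based on global minimality of $F$, with or without sign-flipping competitors, can succeed.

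Your fallback guesses the right weights. $\det(I+D_sMD_s)^{-1}$ is exactly the paper's $c_\alpha$, and the desired identity is $\sum_\alpha c_\alpha=1$, $\sum_\alpha c_\alpha|\alpha|^2=n$; in the example the weights are $(1+c)/4$ and $(1-c)/4$. But this identity is the entire proof. Neither the $e^{-F}$ integrals nor the homotopy is carried out, and it is not clear how either would give an exact identity rather than asymptotic information.

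The missing idea is algebraic:
\begin{itemize}
\item For $\alpha\neq\beta$ in $\I(M)$ we have $M(\alpha-\beta)=\alpha^{-1}-\beta^{-1}=\D(\alpha^{-1})\D(\beta^{-1})(\beta-\alpha)$. Hence $\gamma=\D(\beta^{-1})(\alpha-\beta)\neq0$ satisfies $\bigl(I_n+\D(\alpha)M\D(\beta)\bigr)\gamma=0$.
\item So the multi-affine polynomial $p_\alpha(x)=\det\bigl(I_n+\D(\alpha)M\D(x)\bigr)$ vanishes on $\I(M)\setminus\{\alpha\}$, while $p_\alpha(\alpha)=1/c_\alpha>0$.
\item Since $|\I(M)|=2^n=\dim\LL_n$ (this is where your orthant count is needed), the $c_\alpha p_\alpha$ form a Lagrange basis of $\LL_n$. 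That is, $q=\sum_\alpha q(\alpha)c_\alpha p_\alpha$ for every $q\in\LL_n$.
\item Because $m_{ii}=1$, $p_\alpha(x)=1+\sum_i\alpha_ix_i+(\text{terms of degree}\ge2)$. Taking $q=1$ and $q=x_k$ and comparing constant and linear coefficients gives $\sum_\alpha c_\alpha=1$ and $\sum_\alpha c_\alpha\,\alpha\alpha^\top=I_n$.
\end{itemize}
The trace of the last identity is exactly your averaging identity, and positivity of the $c_\alpha$ then yields some $\alpha$ with $|\alpha|^2\le n$.
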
 

It suffices to prove Theorem~\ref{thm:smalliev} for positive definite correlation matrices. Indeed, let $M$ be a singular correlation matrix. For each $0<t<1$, define
\[
M_t=(1-t)M+tI_n,
\]
which is a positive definite correlation matrix. Assuming Theorem~\ref{thm:smalliev} holds for such matrices,  there exists $\alpha_t \in \I(M_t)$ with $|\alpha_t|^2 \leq n$. Choosing a convergent subsequence from these $\alpha_t$'s as $t \to 0$ yields a limit point $\alpha$ with $|\alpha|^2 \leq n$. Since $M_t\to M$ and $\alpha_t\to\alpha$, passing to the limit in the defining equations~\eqref{eq:ievpoldef} shows that $\alpha\in\I(M)$. Therefore, $M$ has an inverse eigenvector whose norm is at most $\sqrt{n}$.

\smallskip

Hence, from now on we assume that $M$ is a positive definite matrix with $1$'s along the diagonal. Then, $\E_M$ is an ellipsoid, and $\I(M)$ has a very natural geometric description, see Figure~\ref{fig1}. For $x = (x_1, \ldots, x_n) \in \R^n$, let $P(x):= x_1 \cdots x_n$.

\begin{proposition}[\cite{Ambrus2009,LeungLiRakesh2007,leung2008}]
\label{prop:IEV_Gram}
The inverse eigenvectors of $M$ are
precisely the maximizers of the function $|P(x)|$ on the intersections of the ellipsoid $\E_M$ with the orthants of $\R^n$. Consequently, $|\I(M)| = 2^n$.
\end{proposition}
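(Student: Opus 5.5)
Fix a sign vector $\varepsilon\in\{\pm1\}^n$ and the closed orthant $O_\varepsilon=\{x:\varepsilon_i x_i\ge 0\}$. Since $M$ is positive definite, $\E_M$ is compact, so $|P|$ attains its maximum on $\E_M\cap O_\varepsilon$. This maximum is positive, because the point $\varepsilon\cdot c$ with $c=\sqrt{n/(\varepsilon^\top M\varepsilon)}$ has all coordinates nonzero. Any maximizer $x$ therefore lies in the open orthant. There $\log|P(x)|=\sum_i\log|x_i|$ is smooth, and Lagrange multipliers give $x^{-1}=\lambda Mx$. Taking the inner product with $x$ gives $n=\lambda\, x^\top Mx=\lambda n$, so $\lambda=1$ and $x\in\I(M)$. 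Conversely, if $\alpha\in\I(M)$, then all $\alpha_i\neq0$ and $\alpha^\top M\alpha=\sum_i\alpha_i\cdot\alpha_i^{-1}=n$. Hence $\alpha\in\E_M$ and $\alpha$ lies in the open orthant $O_\varepsilon$ with $\varepsilon=\mathrm{sign}(\alpha)$.

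The converse direction needs more: every inverse eigenvector in $O_\varepsilon$ must be the maximizer, and the maximizer must be unique. This gives both the characterization and $|\I(M)|=2^n$. I would reduce to the positive orthant by the substitution $x=Dy$ with $D=\mathrm{diag}(\varepsilon)$. Then $DMD$ is again a positive definite correlation matrix, and $D$ maps $\I(M)\cap O_\varepsilon$ bijectively onto $\I(DMD)\cap\R^n_{>0}$. So it suffices to treat the positive orthant.

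\textbf{Key step (main obstacle): uniqueness of positive inverse eigenvectors.} I would use a log-concavity reformulation. Write $x=e^{s}$ with $s\in\R^n$ and consider
\[
F(s)=\tfrac12\, e^{s\top}Me^{s}-\sum_i s_i .
\]
Its gradient is $\nabla F(s)=e^{s}\odot(Me^{s})-\mathbf 1$, where $\odot$ is the coordinatewise product. So the critical points of $F$ are exactly the positive inverse eigenvectors. The function $F$ is not obviously convex, because $M$ may have negative entries. The convexity route therefore works only for nonnegative $M$, and a different argument is needed in general. Instead, I would use the strict concavity of $\log P$ on the convex set $K=\{x>0: x^\top Mx\le n\}$; this set is convex because it is an ellipsoid's interior intersected with an orthant.

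Let $\alpha>0$ be an inverse eigenvector. For any $y\in K$, concavity of $\log$ gives
\[
\sum_i\log y_i-\sum_i\log\alpha_i\le\sum_i\frac{y_i-\alpha_i}{\alpha_i}=\la M\alpha,y\ra-n .
\]
By Cauchy--Schwarz in the $M$-inner product, $\la M\alpha,y\ra\le\sqrt{\alpha^\top M\alpha}\sqrt{y^\top My}\le n$. Hence $P(y)\le P(\alpha)$, so every positive inverse eigenvector is a global maximizer of $P$ on $K$. The first inequality is strict unless $y=\alpha$, so the maximizer is unique and there is exactly one inverse eigenvector in each open orthant. Counting the $2^n$ orthants, which are disjoint, gives $|\I(M)|=2^n$. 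This tangent-line argument avoids needing convexity of $F$; it is the step I expect to carry the proof, and I would check carefully that the strictness of $\log t\le t-1$ gives uniqueness.
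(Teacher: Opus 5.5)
Your proof is correct and follows essentially the paper's route. In the forward direction you use Lagrange multipliers to show that every orthant-wise maximizer of $|P|$ is an inverse eigenvector. In the converse, your tangent-line bound $\log P(y)-\log P(\alpha)\le \la \alpha^{-1},y\ra-n$ together with Cauchy--Schwarz in the $M$-inner product is exactly the paper's separating hyperplane $\{y:\la \alpha^{-1},y\ra=n\}$ between the superlevel set $\HH_\alpha$ and the solid ellipsoid, with strict concavity of $\log$ in place of the paper's strict-convexity step. The detour through $F(s)$ plays no role and can be dropped.
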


\begin{proof}
The objective function $|P(x)|$ vanishes on the coordinate hyperplanes and is differentiable everywhere else. Thus, by compactness of $\E_M$, $|P|$ attains a maximum on $\E_M$ in every orthant.  Local maxima satisfy the Lagrange multiplier criterion. Since 
\[
\nabla P(x) = P(x) x^{-1} \quad \text{and} \quad \nabla x^\top M x = 2 M x,
\]
this implies that at any local maximum $\alpha$, we must have $\alpha^{-1} = \lambda M \alpha$, and the defining equation of $\E_M$ shows that $\lambda=1$. Therefore, any maximizer of  $|P(x)|$ on an orthant section of $\E_M$ is an inverse eigenvector.

Conversely, let $\alpha \in \I(M)$, and without loss of generality assume that $\alpha \in \R^n_+$.  Let $\HH_\alpha = \{ x \in \R^n_+: P(x) \geq P(\alpha) \}$ be the superlevel set of $P$, and $E$ be the solid ellipsoid bounded by $\E_M$. These are strictly convex sets. Equation \eqref{eq:iev_def} and the above gradient formulae along with the fact  $0 \in E$ show that the hyperplane $H_\alpha$ through $\alpha$ with normal vector $\alpha^{-1}$ is a separating hyperplane for $E$ and $\HH_\alpha$.  By strict convexity, $\E_M \cap \HH_\alpha = \{ \alpha \}$. Thus, $\alpha$ is the unique maximizer of $|P(x)|$ on $\E_M \cap \R^n_+$. Applying the same argument in every orthant completes the proof.
\end{proof}

\begin{figure}[h]
    \centering
    \includegraphics[width=0.5\linewidth]{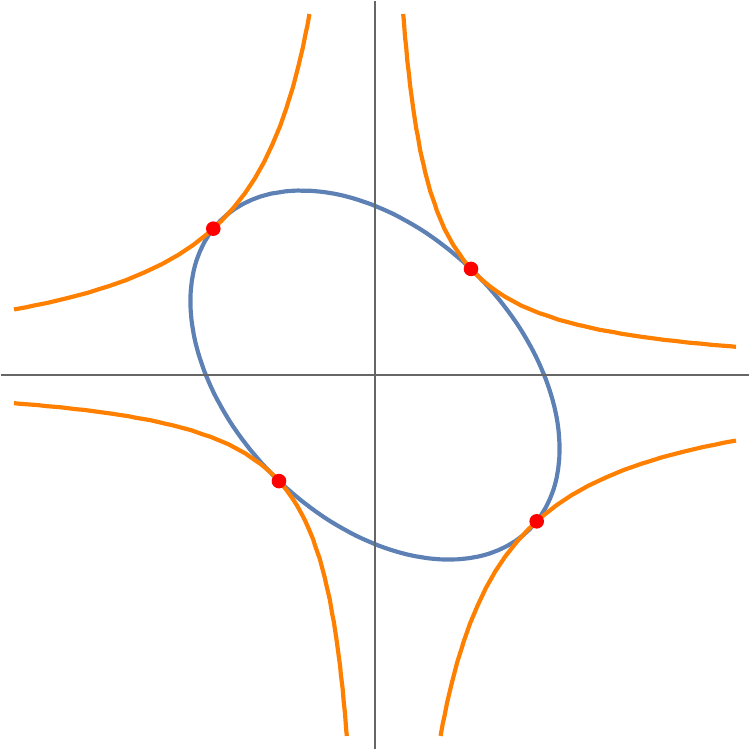}
    \caption{Inverse eigenvectors as points of tangency}
    \label{fig1}
\end{figure}

Suppose now that $\alpha \neq \beta \in \I(M)$. Then 
\[
M(\alpha - \beta) = \alpha^{-1} - \beta^{-1} =\D(\alpha^{-1})\D(\beta^{-1}) (\beta - \alpha) 
\]
where for a vector $x \in \R^n$, $\D(x)\in \R^{n \times n}$ is the diagonal matrix with diagonal $x$. 
Let 
\[
\gamma =\D(\beta^{-1}) (\alpha - \beta).
\]
Then $\gamma \neq 0$, and 
\[
\bigl(\D(\alpha) M\D(\beta) \bigr) \gamma =\D(\alpha) M ( \alpha - \beta) =\D(\alpha) \D(\alpha^{-1})\D(\beta^{-1}) (\beta - \alpha) = - \gamma.
\]
Equivalently, 
\[
\bigl(I_n +\D(\alpha) M\D(\beta) \bigr)\gamma = 0.
\]
This shows that for a fixed $\alpha$, the matrix $I_n +\D(\alpha) M\D(\beta) $ is singular for every $\beta \in \I(M) \setminus \{ \alpha \}$.

Accordingly, for each $\alpha \in \I(M)$, introduce
\begin{equation}\label{eq:padef}
p_\alpha (x) := \det \bigl(I_n +\D(\alpha) M\D(x) \bigr).
\end{equation}
Then $p_\alpha(\beta) = 0$ for every $\beta \in \I(M) \setminus \{ \alpha \}$. Furthermore, $p_\alpha(\alpha) >0$, since $\D(\alpha)M \D(\alpha)$ is positive definite, and hence so is
$I_n+\D(\alpha)M \D(\alpha)$. 
Define
\begin{equation}\label{eq:cdef}
c_\alpha = \frac{1}{ \det\bigl(I_n+\D(\alpha) M \D(\alpha)\bigr)} >0.
\end{equation}
Then for every $\alpha, \beta \in \I(M)$, 
\begin{equation}\label{eq:deltaab}
c_\alpha p_\alpha(\beta)  = \delta_{\alpha\beta}, 
\end{equation}
where $\delta_{\alpha\beta}$ denotes the Kronecker delta.

By expanding the determinant one readily obtains that $p_\alpha(x)$ is a polynomial in the  variables $x_1, \ldots, x_n$ that has degree at most $1$ in each variable. Such polynomials will be called  {\em multi-affine}. They form a vector space over the reals, that we denote by $\LL_n$. A basis of $\LL_n$ is given by the set of monomials of the form $x_1^{\delta_1} \cdots x_n^{\delta_n}$, where  $\delta_i \in \{ 0,1 \}$ for each $i = 1, \ldots, n$. Consequently, $\dim \mathcal{L}_n=2^n$.

Formula~\eqref{eq:deltaab} shows that the functions $\{ p_\alpha: \ \alpha \in \I(M) \}$ are linearly independent in~$\LL_n$. Since $|\I(M)| = \dim \mathcal{L}_n$, they form a basis in $\LL_n$. Therefore, we obtain the following Lagrange interpolation formula. 

\begin{theorem}[Interpolation formula for multi-affine polynomials]\label{thm:interpolation}
    Any polynomial $q \in \LL_n$ satisfies the representation
    \[
    q (x) = \sum_{\alpha \in \I(M)}  q(\alpha)  c_\alpha p_\alpha (x).
    \]
\end{theorem}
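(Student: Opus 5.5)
The plan is to realize the interpolation formula as the expansion of $q$ in the basis $\{p_\alpha : \alpha\in\I(M)\}$ of $\LL_n$, and then to read off the coefficients by evaluating at the nodes, using the Kronecker delta relation \eqref{eq:deltaab}. The preceding discussion already contains the two ingredients. First, each $p_\alpha$ lies in $\LL_n$. Second, the family is linearly independent and has exactly $2^n=\dim\LL_n$ members, by Proposition~\ref{prop:IEV_Gram}.

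First I will recheck that $p_\alpha\in\LL_n$. The matrix $I_n+\D(\alpha)M\D(x)$ has its $j$-th column equal to $e_j + x_j\,\D(\alpha)Me_j$, so only this column depends on $x_j$, and it does so affinely. Multilinearity of the determinant in the columns then shows that $p_\alpha$ has degree at most one in each $x_j$.

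Next comes linear independence. Suppose $\sum_\alpha \lambda_\alpha p_\alpha = 0$. Evaluating at $\beta\in\I(M)$ and applying \eqref{eq:deltaab} gives $\lambda_\beta/c_\beta = 0$, so every $\lambda_\beta$ vanishes; here I use that $c_\beta>0$. Proposition~\ref{prop:IEV_Gram} supplies exactly $2^n$ distinct inverse eigenvectors, so $\{p_\alpha\}$ is a basis of $\LL_n$.

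Finally, I will write an arbitrary $q\in\LL_n$ as $q=\sum_\alpha \mu_\alpha c_\alpha p_\alpha$; this is possible since the $c_\alpha p_\alpha$ also form a basis. Evaluating at $\beta\in\I(M)$ and using \eqref{eq:deltaab} gives $q(\beta)=\mu_\beta$, which is the claimed representation.

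There is no real obstacle at this point. The only substantive inputs are the count $|\I(M)|=2^n$, including the distinctness of the inverse eigenvectors, and the nonvanishing $p_\alpha(\alpha)>0$. Both are already established above: distinctness holds because the points lie in different open orthants, and the positivity comes from positive definiteness of $I_n+\D(\alpha)M\D(\alpha)$.
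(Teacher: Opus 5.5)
Your proposal is correct and follows the paper's argument: the Kronecker relation \eqref{eq:deltaab} gives linear independence of the $p_\alpha$, the count $|\I(M)|=2^n=\dim\LL_n$ from Proposition~\ref{prop:IEV_Gram} makes them a basis, and evaluating the expansion at the nodes identifies the coefficients. Your column-multilinearity check that $p_\alpha\in\LL_n$ spells out what the paper compresses into ``by expanding the determinant''.
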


The interpolation formula contains considerably more information than the estimate we need: it yields a decomposition of the identity analogous to those arising from the eigenvectors of a positive definite matrix or from contact points between the unit sphere and a convex body in John position~\cite{john1948extremum, Ball1997}. Note that the inverse spectral decomposition is implicit in \cite[Formula (7)]{OuimetGreaves2026} and \cite[Theorem 2.4]{MartinezOrtegaMoreno2026_Aomoto}.

\begin{theorem}[Inverse Spectral Theorem for positive definite correlation matrices] \label{thm:inversespectral}
Let $M \in \R^{n \times n}$ be a positive definite correlation matrix. Then 
\begin{equation}\label{eq:iev_decomposition}
\sum_{\alpha \in \I(M)} c_\alpha\,\alpha\otimes\alpha = I_n 
\end{equation}
and
\begin{equation}\label{eq:sumc}
\sum_{\alpha \in \mathcal{I}(M)} c_\alpha=1.
\end{equation}
\end{theorem}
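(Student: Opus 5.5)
The plan is to apply the interpolation formula of Theorem~\ref{thm:interpolation} to the simplest multi-affine polynomials, namely the constant $1$ and the coordinate functions $x_i$. We then compare constant and linear coefficients on both sides. All the information we need is in the low-order Taylor coefficients of $p_\alpha$ at $x=0$, so the first step is to compute those.

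\emph{Step 1: low-order coefficients of $p_\alpha$.} Since $p_\alpha(0)=\det I_n=1$, the constant term of every $p_\alpha$ equals $1$. For the linear terms, write $A=\D(\alpha)M$, so that $p_\alpha(x)=\det(I_n+A\D(x))$. Using $\frac{d}{dt}\det(I_n+tB)\big|_{t=0}=\operatorname{tr}B$, the coefficient of $x_j$ in $p_\alpha$ equals $\operatorname{tr}(A E_{jj})=A_{jj}=\alpha_j m_{jj}=\alpha_j$, because $M$ has unit diagonal. Hence
\[
p_\alpha(x)=1+\sum_{j=1}^n \alpha_j x_j + (\text{terms of degree}\ge 2).
\]
This is the only place where the correlation-matrix hypothesis $m_{jj}=1$ enters. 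It is also the step I expect to need the most care, though it is routine: one should check that the degree-one part of the multi-affine expansion of the determinant is exactly this trace term. Expanding $\det(I_n+A\D(x))$ as a sum of principal minors of $A\D(x)$, the degree-one part comes from the $1\times1$ principal minors $A_{jj}x_j$.

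\emph{Step 2: proof of \eqref{eq:sumc}.} Apply Theorem~\ref{thm:interpolation} to $q\equiv 1\in\LL_n$. This gives $1=\sum_{\alpha} c_\alpha p_\alpha(x)$ identically in $x$. Evaluating at $x=0$ and using $p_\alpha(0)=1$ yields $\sum_\alpha c_\alpha=1$.

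\emph{Step 3: proof of \eqref{eq:iev_decomposition}.} Fix $i\in[n]$ and apply Theorem~\ref{thm:interpolation} to $q(x)=x_i\in\LL_n$. This gives
\[
x_i=\sum_{\alpha\in\I(M)} c_\alpha\,\alpha_i\,p_\alpha(x).
\]
Comparing the coefficients of $x_j$ on both sides, and using Step~1, we get
\[
\delta_{ij}=\sum_\alpha c_\alpha\,\alpha_i\alpha_j \quad \text{for all } j\in[n].
\]
Letting $i$ and $j$ range over $[n]$, this is precisely the entrywise form of $\sum_\alpha c_\alpha\,\alpha\otimes\alpha=I_n$.
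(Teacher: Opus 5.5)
Your proposal is correct and follows the paper's proof essentially verbatim. You expand $p_\alpha(x)=1+\sum_j\alpha_j x_j+\ldots$ using $m_{jj}=1$, apply Theorem~\ref{thm:interpolation} to $q\equiv 1$ (evaluated at $x=0$) and to $q=x_i$ (comparing coefficients of $x_j$), and your principal-minor justification of the linear term supplies detail that the paper only describes as ``readily obtained''.
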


\begin{proof}
Expanding the determinant in \eqref{eq:padef} and using $m_{ii}=1$ for every
$i\in[n]$, we readily obtain
\begin{equation}\label{eq:paformula}
p_\alpha(x)
=
1+\sum_i\alpha_i x_i+\widetilde p_\alpha(x),
\end{equation}
where $\widetilde p_\alpha$ consists of terms of total degree at least $2$.

We combine this expansion with the interpolation formula from
Theorem~\ref{thm:interpolation}. First, applying it to $q\equiv1$ and
evaluating at $x=0$ gives
\[
1
=
\sum_{\alpha\in\I(M)}c_\alpha p_\alpha(0)
=
\sum_{\alpha\in\I(M)}c_\alpha,
\]
which proves \eqref{eq:sumc}.

Next, fix $k\in[n]$ and use the interpolation formula for $q=x_k$:
\[
x_k
\equiv
\sum_{\alpha\in\I(M)}
c_\alpha\alpha_k p_\alpha(x).
\]
Comparing the linear terms in view of \eqref{eq:paformula}, we obtain
\[
\delta_{kl}
=
\sum_{\alpha\in\I(M)}
c_\alpha\alpha_k\alpha_l
\qquad (k,l\in[n]).
\]
These identities are equivalent to \eqref{eq:iev_decomposition}, completing the proof.
\end{proof}

The conclusion of the argument is now immediate.

\begin{proof}[Proof of Theorem~\ref{thm:smalliev}]
Comparing traces in \eqref{eq:iev_decomposition} yields that 
\[
\sum_{\alpha \in \I(M)} c_\alpha |\alpha|^2 = n 
\]
where the coefficients $c_\alpha$ are positive and sum to 1. Therefore, there exists $\alpha \in \I(M)$ that satisfies $|\alpha|^2 \leq n$.
\end{proof}
Together with \eqref{eq:Malpha}, this proves Theorem~\ref{thm:strongpol}; Theorem~\ref{thm:pol} then follows by the arithmetic--geometric mean inequality.

\bibliographystyle{siam}
\bibliography{EP_ref}

\bigskip

\footnotesize{Research was partially supported by the ERC Advanced Grant "GeoScape" no.  882971, by Hungarian National Research (NKFIH) grants no. KKP-133819, 147145, 147544, and 150151, which has been implemented with the support provided by the Ministry of Culture and Innovation of Hungary from the National Research, Development and Innovation Fund, financed under the ADVANCED-24 funding scheme. This research was funded by the grant 2024-1.2.8-TÉT-IPARI-CN-2025-00011,
with the support provided by the National Research,
Development and Innovation Office from the National Research,
Development and Innovation Fund, and financed under the
2024-1.2.8-TÉT-IPARI-CN funding scheme. }

\vspace{1 cm}

\noindent
{\sc Gergely Ambrus}
\smallskip

\noindent
{\em Bolyai Institute, University of Szeged, Hungary,\\ and\\ 
 Alfréd Rényi Institute of Mathematics, Budapest, Hungary
 }
\smallskip

\noindent
e-mail address: \texttt{ambrus@server.math.u-szeged.hu; ambrus@renyi.hu}

\end{document}